\theoremstyle{plain}
\newtheorem{theorem}{Theorem}
\newtheorem{proposition}[theorem]{Proposition}
\newtheorem{lemma}[theorem]{Lemma}
\theoremstyle{remark}
\newtheorem{remark}[theorem]{Remark}
\newcommand{\F}{{\mathbb F}}
\newcommand{\Q}{{\mathbb Q}}
\newcommand{\R}{{\mathbb R}}
\newcommand{\Z}{{\mathbb Z}}
\def\bar{\overline}
\newcommand{\cL}{ {\mathcal L} }
\newcommand{\Supp}{ {\rm Supp} }     
\newcommand{\ord}{{\rm ord}}
\newcommand{\sgn}{{\rm sgn} }
\newcommand{\NP }{ {\rm  NP} }     
\begin{document}

\title[Some families of supersingular Artin-Schreier curves]
{Some families of supersingular Artin-Schreier curves in
characteristic $>2$}
\author{Hui June Zhu}
\address{Department of Mathematics, SUNY at Buffalo, Buffalo, NY 14260}
\email{zhu@cal.berkeley.edu}

\date{\today}
\keywords{Supersingularity, Artin-Schreier curves, Dwork method.}
\subjclass{11,14}

\maketitle

\section{Introduction}

A curve over finite field is supersingular if
its Jacobian is supersingular as an abelian variety.
On the one hand, supersingular
abelian varieties form the smallest (closed)
stratum in the moduli space of abelian varieties,
on the other the intersection of
Jacobian locus and the stratification of
moduli space is little known. Consequently
it is very difficult to locate a family of
supersingular curve.  See \cite{Li-Oort:98}.
In characteristic $2$ some ground-breaking progress has been make in \cite{SZ:01}\cite{SZ:02}, where families of supersingular curves are given explicitly using some new sharp slope estimation method.
However, that method does not apply easily to cases when characteristic is not
$2$. In this paper we develop a new method
to allow us to prove supersingularity of Artin-Schreier curves in characteristic $>2$. To illustrate how our method works, we use it show

\begin{theorem}
The following two families of Artin-Schreier curves are supersingular:
$$
\begin{array}{lll}
X_1:& y^7-y=x^5+cx^2, &c\in\bar\F_7\\
X_2:& y^5-y=x^7+cx,   &c\in \bar\F_5.
\end{array}
$$
\end{theorem}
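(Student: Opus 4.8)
The plan is to translate supersingularity into a statement about a $p$-adic Newton polygon and then to bound that polygon from below by $\tfrac12$ via Dwork's theory. First I would record that both curves have genus $g=12$: the map $(x,y)\mapsto x$ exhibits $X_i$ as a degree-$p$ Artin--Schreier cover of $\mathbb P^1$ ramified only over $x=\infty$, so Riemann--Hurwitz (with different exponent $(p-1)(d+1)$, where $d=\deg f$) gives $2g-2=-2p+(p-1)(d+1)=22$ in both cases. Next, the $\Z/p$-action decomposes the numerator $P(T)$ of the zeta function as $P(T)=\prod_{t\in\F_7^\times}L_t(T)$ (resp.\ over $\F_5^\times$), where $L_t(T)$ is the $L$-function of the exponential sum $\sum_x\psi(\Tr(t\,f(x)))$ attached to a nontrivial additive character $\psi$. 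Since $\gcd(d,p)=1$, each $L_t$ has degree $d-1$, and $(p-1)(d-1)=24=2g$ in both cases, so the factorization accounts for the whole Jacobian. The curve is supersingular precisely when every slope of $\NP(P)$ equals $\tfrac12$.

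The key reduction is that, by the functional equation (Poincar\'e duality), the slopes of $P(T)$ are symmetric under $\lambda\mapsto 1-\lambda$; hence it suffices to prove every slope is $\ge\tfrac12$, i.e.\ that $\NP(P)$ lies on or above the segment of slope $\tfrac12$. I would obtain this lower bound through Dwork's method: choose Dwork's $\pi$ with $\pi^{p-1}=-p$, the splitting function $\theta(x)=\exp(\pi(x-x^p))$, and form $F_t(x)=\prod_i\theta(\widehat{ta_i}\,x^i)$ from the Teichm\"uller lifts of the coefficients of $tf$. Dwork's trace formula identifies $L_t(T)^{\pm1}$ with the Fredholm determinant of the Frobenius operator $\alpha_t$ acting on a $p$-adic Banach space with monomial basis $\{x^j\}$, and the matrix of $\alpha_t$ has entries built from the coefficients of $F_t$, whose valuations are controlled by the Artin--Hasse estimates. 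Reading off the Newton polygon of this matrix then bounds $\NP(L_t)$, and hence $\NP(P)$, from below.

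The hard part is making this lower bound \emph{sharp}. The classical Hodge bound only gives slopes $\ge\tfrac1d,\tfrac2d,\dots$, which lie strictly below $\tfrac12$, so the naive estimate is too weak to force supersingularity. The core of the argument must therefore be a refined valuation estimate for the entries of $\alpha_t$ that exploits the specific arithmetic of the pairs $(p,d)=(7,5)$ and $(5,7)$---in particular the congruences relating $p$ to $d$ and to the sub-leading exponent ($2$ for $X_1$, $1$ for $X_2$)---to show that the extra vanishing and divisibility among the off-diagonal entries push every slope of the Dwork matrix up to $\tfrac12$. I expect this estimate to be uniform in $c\in\bar\F_p$, since the lower-order term $cx^2$ (resp.\ $cx$) should contribute only higher-valuation perturbations that cannot lower the Newton polygon; establishing this uniformity, and verifying that the sub-leading term creates no slope below $\tfrac12$, is where the main technical difficulty lies. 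Once $\NP(P)\ge\tfrac12$ is in hand, the functional-equation symmetry collapses all slopes to $\tfrac12$, proving both families supersingular.
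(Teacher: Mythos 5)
Your outline follows the same broad strategy as the paper (reduce to exponential-sum $L$-functions via the character decomposition, bound the Newton polygon from below by Dwork's trace formula, and use the functional-equation symmetry to collapse the slopes to $\tfrac12$), and your framing of the problem is accurate. But the proposal stops exactly at the point where the paper's actual content begins. You correctly observe that the naive Hodge-type bound on the entries of the Dwork matrix is too weak, and you then write that a ``refined valuation estimate \dots is where the main technical difficulty lies'' and that you ``expect'' the lower-order term to contribute only higher-valuation perturbations. That expectation is never substantiated, and it is the entire theorem. Nothing in the proposal explains \emph{how} one upgrades the entrywise bound $\ord_p G_{pi-j}\ge g_{ij}$ (which by itself does not give slope $\ge\tfrac12$) to the needed bound on the coefficients of the Fredholm determinant.

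The paper's mechanism is quite specific and you would need something equivalent to it. One passes to the $a$-fold product $F_a=FF^{\tau^{-1}}\cdots F^{\tau^{-(a-1)}}$ for $a$ divisible by $4$ (resp.\ $8$), and proves the superlinear estimate $\ord_p(M_1\cdots M_a)_{ij}>\tfrac{5a}{12}+\tfrac{i-j}{12}$ for \emph{any} matrices satisfying the entrywise bound. The proof is a min-plus (tropical) iteration: one sets $D_{ij}:=g_{ij}-(\tfrac{5}{12}+\tfrac{i-j}{12})$, shows the defect matrix $D$ has only finitely many non-positive entries confined to a $12\times12$ (resp.\ $27\times27$) corner, and verifies by a finite computation that the fourfold (resp.\ eightfold) min-plus power $(D*D)*(D*D)$ of that corner is strictly positive. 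This yields $\ord_q C_1>\tfrac{5}{12}$ and $\ord_q C_2>\tfrac{5}{6}$ (plus $C_3$ for $X_2$), after which the symmetry \emph{together with} the integrality of the vertex ordinates in $\tfrac{1}{p-1}\Z$ forces the first slope to be $\tfrac12$ --- note the paper never proves all slopes are $\ge\tfrac12$ directly, only bounds on the first two or three $C_n$. Without this iteration (or some substitute exploiting cancellation over $a$ Frobenius twists), the argument does not close, so as written the proposal has a genuine gap rather than a complete proof.
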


\begin{remark}
We remark that using the same technique we were able to prove
the supersingularity of the following family
$y^3-y=x^7+a x^2+ b x$ over $\bar\F_3$
(apr\`es Noam Elkies \cite{Elkies},
who is able to do so using a completely different
approach).
\end{remark}

Our method is based upon the $p$-adic Dwork method (see Section 2), and it reduces the supersingularity criterion of an Artin-Schreier curve
to some strikingly simple linear algebra computation. Proof of
Theorem 1 is done in Sections 3 and 4.
This makes the method extremely promising in locating or verifying supersingularity of more families of Artin-Schreier curves in characteristic small (relative to the genus).

Finally we remark that $X_1$ and $X_2$ stood out as
supersingular suspects via extensive computer search (via consideration of their monodromy)
by joint effort of several people: Noam Elkies, Nick Katz, Eric Rain and Michael Zieve. We thank Noam Elkies and Bjorn Poonen for passing on this ``supersingular" question to us.

\section{The Dwork trace formula}

We first recall Dwork trace formula. Let $\F_q=\F_{p^a}$ for some positive integer $a$. Let $\Omega_1:=\Q_p(\zeta_p)$ and
$\Omega_a$ its unramified extension of degree $a$.
Let $\ord_p(\cdot)$ be the $p$-adic valuation and
let $\ord_q(\cdot)$ be the normalized
$p$-adic valuation so that $\ord_qq = 1$.
Let $\tau$ be the lift of Frobenius endomorphism
$a\mapsto a^p$ of $\F_q$ to $\Omega_a$ which fixes $\Omega_1$.
For any $c\in\R_{>0}$ and $b\in \R$ let
$\cL(c,b)$ be the set of power series defined by
$$\cL(c,b):=\{
\sum_{n=0}^{\infty}A_nX^n\mid A_n\in \Q_p(\zeta_p),
\ord_pA_n\geq \frac{cn}{d}+b\}.$$
Let $\cL(c):=\bigcup_{b\in\R}\cL(c,b)$.
Let $E(x)$ be the Artin-Hasse exponential function, that is,
$E(x)=\exp(\sum_{j=0}^{\infty}\frac{x^{p^j}}{p^j})$.
Let $\gamma$ be a $p$-adic root of $\log(E(x))$ with
$\ord_p\gamma=\frac{1}{p-1}$ in $\bar\Q_p$.
Write $E(\gamma x)=\sum_{m=0}^{\infty}\lambda_mx^m$
for $\lambda_m\in\Z_p[\zeta_p]$.
Note the following properties,
\begin{eqnarray}\label{E:22}
\ord_p\lambda_m&\geq&\frac{m}{p-1};
\end{eqnarray}
for $0\leq m\leq p-1$ we have,
\begin{eqnarray}\label{E:33}
\lambda_m=\frac{\gamma^m}{m!}\mbox{ and } \ord_p
\lambda_m=\frac{m}{p-1}.
\end{eqnarray}
Let $f(x):=x^d+\bar{a}_{d-1}x^{d-1}+\cdots + \bar{a}_1x$ be a
polynomial in $\F_q[x]$. Let $\hat{f}(x)$ be its Teichm\"uller lifting
in $\Omega_a$, that is,
$\hat{f}(x):=x^d+\hat{a}_{d-1}x^{d-1}+\cdots+\hat{a}_1x$ where
$\hat{a}_\ell^q=\hat{a}_\ell$ and $\bar{a}_\ell\equiv\hat{a}_\ell\bmod p$.
We let $\bar{a}_d:=1$ and $\hat{a}_d:=1$ for ease of formulation.
Let
\begin{eqnarray*}
G(X)=
(\sum_{m_1=0}^{\infty}\lambda_{m_1}\hat{a}_1^{m_1}X^{m_1})
\cdots
(\sum_{m_d=0}^{\infty}\lambda_{m_d}\hat{a}_d^{m_d}X^{dm_d})
=\sum_{n=0}^{\infty}G_nX^n.
\end{eqnarray*}
Then $G_n=0$ for $n<0$.
For every integer $n\geq 0$,
\begin{eqnarray}\label{E:3}
G_n=\sum_{\substack{m_\ell\geq 0\\\sum_{\ell=1}^{d}\ell m_\ell=n}}
\lambda_{m_1}\cdots\lambda_{m_d}\hat{a}_\ell^{m_1}
                           \cdots\hat{a}_{d-1}^{m_{d-1}},
\end{eqnarray}
where we define $0^0:=1$.
Let $\Supp(f)$ denote a set of $\ell$'s with $1\leq \ell\leq d$
such that $\bar{a}_\ell=0$ for every $\ell\not\in\Supp(f)$.
Then for $n\geq 0$,
\begin{eqnarray}\label{E:4}
G_n=\sum_{\substack{m_\ell\geq 0\\\sum_{\ell\in\Supp(f)}\ell m_\ell=n}}
\prod_{\ell\in\Supp(f)}\lambda_{m_\ell}\hat{a}_\ell^{m_\ell};
\end{eqnarray}
hence,
\begin{eqnarray}\label{E:Bound}
\ord_pG_n\geq \frac{\min(\sum_{\ell\in\Supp(f)}m_\ell)}{p-1},
\end{eqnarray}
where the minimum is taken over all integers $m_\ell\geq 0$
and $\sum_{\ell\in\Supp(f)}\ell m_\ell = n$.

Let $\phi$ be the Dwork $\phi$ operator on $\cL(c)$ defined by
$\phi(\sum B_nX^n):=\sum B_{np}X^n.$
Let $G(X)$ denote the multiplication map by $G(X)$, then
the composition map $\alpha:=\tau^{-1}\cdot\phi \cdot G(X)$
is an endomorphism of $\cL(p/(p-1))$.
Let $F$ represent the matrix of $\alpha$
under the monomial basis
$\{1,X,X^2,\ldots\}$ of $\cL(p/(p-1))$.
Note that $F=\{G_{pi-j}^{\tau^{-1}}\}_{i,j\geq 1}$.
Let $\alpha_a:=\alpha^a$ then, by a similar argument as in
\cite[Section 2]{Zhu:2},
one finds that $\alpha_a$
is represented by the matrix
$$
F_a =  FF^{\tau^{-1}}\cdots F^{\tau^{-(a-1)}}.
$$

Let $C_0=1$, and for every $n\geq 1$ let
\begin{eqnarray}\label{E:1}
C_n
&:=&
\sum_{1\leq u_1<u_2<\ldots<u_n}
\sum_{\sigma\in S_n}
\sgn(\sigma)\prod_{i=1}^{n}(F_a)_{u_i,u_{\sigma(i)}},
\end{eqnarray}
where $\sgn(\sigma)$ is the signature of the permutation $\sigma$
in the $n$-th symmetric group $S_n$.
Let $L(f/\F_q;T)$ be the $L$ function of exponential sums
of $f(x)$ over $\F_q$. By Dwork trace formula,
see \cite[(34)]{Bombieri}, we have
\begin{eqnarray*}
L(f/\F_q;T)
&=&\frac{\det(I-F_aT)}{(1-qT)\det(I-F_aq T)}\\
&=&\frac{1+\sum_{n=1}^{\infty}(-1)^nC_nT^n}
{(1-qT)(1+\sum_{n=1}^{\infty} (-1)^nC_nq^{n}T^n)}\\
&=&1+b_1T+\cdots +b_{d-1}T^{d-1}
\end{eqnarray*}
which lies in $\Z[\zeta_p][T]$.
Let the Newton polygon $\NP(f/\F_q)$
of the $L$ function $L(f/\F_q;T)$ be the lower
convex hull of points $(n,\ord_qb_n)$ for $0\leq n\leq d-1$
with $b_0:=1$.

\begin{proposition}\label{P:Dwork}
Let notation be as above.
Then $\NP(f/\F_q)$ is equal to the
lower convex hull of points
$(n,\ord_q C_n)$ where $0\leq n\leq d-1$.
Moreover, $\NP(f/\F_q)$ can be obtained from
the (normalized)  Newton polygon of the zeta function of
the curve $y^p-y=f(x)$ by reducing a factor $1/(p-1)$
the ordinates and the abscissas of the latter.
\end{proposition}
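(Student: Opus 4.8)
The plan is to treat the two assertions separately. For the first, I would invert the Dwork trace formula. Since the coefficients $C_n$ of \eqref{E:1} are exactly the traces of the exterior powers of $\alpha_a$, the entire characteristic series is $\det(I-F_aT)=\sum_{n\geq 0}(-1)^nC_nT^n$, and the trace formula rearranges into the identity of $p$-adic entire functions
$$
\det(I-F_aT)=L(f/\F_q;T)\,(1-qT)\,\det(I-F_aqT).
$$
I would then pass to $\ord_q$-Newton polygons of both sides, using that the slope multiset of a product is the union of the slope multisets of its factors.

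Let $s_1\leq s_2\leq\cdots$ be the slopes of $\det(I-F_aT)$, so that the lower hull of $\{(n,\ord_qC_n)\}$ has exactly these slopes. On the right-hand side the factor $(1-qT)$ contributes a single slope $1$, and $\det(I-F_aqT)$ contributes the slopes $s_i+1$; both families lie in $[1,\infty)$. The remaining factor $L(f/\F_q;T)$ has degree $d-1$ and, being the $L$-function of a one-variable exponential sum, has all of its slopes in $[0,1]$. Matching the number of slopes $\leq c$ on each side for $c<1$ shows that the slopes of $\det(I-F_aT)$ that are strictly below $1$ coincide with those of $L$; a count of the slopes equal to $1$ then forces the $d-1$ smallest slopes of $\det(I-F_aT)$ to be precisely the slopes of $L$, which is the assertion that the lower hull of $\{(n,\ord_qC_n)\}$ over $0\leq n\leq d-1$ equals $\NP(f/\F_q)$. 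I expect this slope-separation step to be the main obstacle: one must pin down that there are exactly $d-1$ low slopes and correctly match those of value exactly $1$ against the shift-by-one contribution of the denominator.

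For the second assertion I would exploit the Artin-Schreier structure of $y^p-y=f(x)$. This curve covers the affine $x$-line as a Galois cover with group $\F_p$ acting by $y\mapsto y+1$, totally ramified over $\infty$ since $p\nmid d$. Sorting the point count by the additive characters $\psi$ of $\F_p$ factors the numerator of the zeta function as $\prod_{\psi\neq 1}L(\psi,f/\F_q;T)$, a product of $p-1$ exponential-sum $L$-functions each of degree $d-1$, hence of total degree $2g=(p-1)(d-1)$. The factor singled out by the choices of $\zeta_p$ and $\gamma$ in the Dwork construction is our $L(f/\F_q;T)$, and the other factors are its images under $\mathrm{Gal}(\Q_p(\zeta_p)/\Q_p)\cong\F_p^\times$. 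Because $\ord_q$ is invariant under this Galois action, every factor has the same Newton polygon $\NP(f/\F_q)$.

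It then remains to combine these $p-1$ identical polygons, which is bookkeeping. The slope multiset of the zeta numerator is $p-1$ copies of the slope multiset of $\NP(f/\F_q)$, so its (normalized) Newton polygon runs from $(0,0)$ to $\bigl((p-1)(d-1),(p-1)(d-1)/2\bigr)$ with each slope-run lengthened by the factor $p-1$. Contracting both the abscissas and the ordinates by $1/(p-1)$ leaves the slopes unchanged while collapsing each run to its original length, and the endpoint returns to $\bigl(d-1,(d-1)/2\bigr)$, recovering $\NP(f/\F_q)$ exactly. This yields the claimed relation and, in particular, reduces supersingularity of $y^p-y=f(x)$ to the statement that $\NP(f/\F_q)$ is the straight segment of slope $1/2$, which by the first assertion is a computation with the $C_n$.
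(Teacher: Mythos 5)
Your proposal is correct and is in substance the same argument the paper relies on: the paper's proof consists of two citations, and what you write out is precisely their content --- the first assertion is the slope-matching/Weierstrass-factorization argument applied to $\det(I-F_aT)=L(f/\F_q;T)(1-qT)\det(I-F_aqT)$ (this is \cite[Proposition 2.2]{Zhu:1}), and the second is the decomposition of the zeta numerator of $y^p-y=f(x)$ into the $p-1$ Galois-conjugate $L$-functions attached to the nontrivial additive characters of $\F_p$ (this is \cite[(106)]{Bombieri}). The one step you should still record explicitly is the passage from ``the $d-1$ smallest slopes of the entire function $\det(I-F_aT)$ are the slopes of $L$'' to ``the lower hull of $(n,\ord_qC_n)$ for $0\leq n\leq d-1$ equals $\NP(f/\F_q)$'': because the slopes of $L$ lie in $[1/d,1-1/d]$ while all remaining slopes of $\det(I-F_aT)$ are $\geq 1$, the point $(d-1,\ord_qC_{d-1})$ is a vertex of the full Newton polygon of $\det(I-F_aT)$, so truncating the point set at $n=d-1$ does not change the hull on $[0,d-1]$.
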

\begin{proof}
  The first assertion follows from a similar argument as that of
  Proposition 2.2 of \cite{Zhu:1}.  The second assertion is a
  well-known fact, which can be found in \cite[(106)]{Bombieri} for
example, or see \cite[Introduction]{Zhu:1}.
\end{proof}

\section{$X_1: y^7-y=x^5+cx^2$ is supersingular}

Let $\lfloor r\rfloor$ denote the greatest integer $\leq r$.
In this section let $\Supp(f)=\Supp(x^5+cx^2)=\{2,5\}$.
For any $c\in\F_q$ with $q=7^a$,
by (\ref{E:Bound}), one has
\begin{eqnarray*}
\ord_7G_n
&\geq&
\left\{
\begin{array}{ll}
+\infty                &\mbox{if $n<0$},\\
\min_{m_2,m_5\geq 0; 2m_2+5m_5=n}(m_2+m_5)/6 &\mbox{if $n\geq 0$}
\end{array}
\right.
\\
&\geq&
\left\{
\begin{array}{ll}
+\infty & \mbox{if $n<0$} \\
\frac{n}{12}-\frac{\lfloor\frac{n}{5}\rfloor-1}{4}&
                    \mbox{if $n\geq 0$ and $n\equiv 1,3\bmod 5$,}\\
\frac{n}{12}-\frac{\lfloor\frac{n}{5}\rfloor}{4}&
                    \mbox{if $n\geq 0$ and $n\equiv 0,2,4\bmod 5$,}
\end{array}\right.
\end{eqnarray*}
Let $g_{ij}$ be the lower bound of
$\ord_7G_{7i-j}$ given above.

\begin{lemma}\label{L:1}
Let $a$ be any positive integer divisible by $4$. Let
$M_1, \ldots, M_a$ be infinite matrices over a $p$-adic ring
such that every matrix $M_\ell$ (where $\ell = 1,\ldots,a$)
has  $\ord_p (M_\ell)_{ij}\geq g_{ij}$.
Then
$$\ord_p (M_1\cdots M_a)_{ij}> \frac{5a}{12}+\frac{i-j}{12}.$$
\end{lemma}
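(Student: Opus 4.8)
The plan is to bound $\ord_p(M_1\cdots M_a)_{ij}$ by expanding the product and invoking the ultrametric inequality. Setting $k_0:=i$ and $k_a:=j$, one has $(M_1\cdots M_a)_{ij}=\sum \prod_{\ell=1}^a (M_\ell)_{k_{\ell-1}k_\ell}$ with the sum over all $k_1,\ldots,k_{a-1}\geq 1$, so that
\[
\ord_p(M_1\cdots M_a)_{ij}\geq \inf_{k_1,\ldots,k_{a-1}\geq 1}\ \sum_{\ell=1}^a g_{k_{\ell-1}k_\ell}.
\]
It thus suffices to show that every ``path'' $i=k_0,k_1,\ldots,k_a=j$ of positive integers satisfies $\sum_{\ell=1}^a g_{k_{\ell-1}k_\ell}>\frac{5a}{12}+\frac{i-j}{12}$. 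Paths for which some $n_\ell:=7k_{\ell-1}-k_\ell$ is negative contribute $g_{k_{\ell-1}k_\ell}=+\infty$ and are harmless, so I may assume $n_\ell\geq 0$ for all $\ell$.

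Next I would clear denominators and telescope. Define the per-step profit $P_\ell:=12\,g_{k_{\ell-1}k_\ell}-5-(k_{\ell-1}-k_\ell)$; since $\sum_\ell(k_{\ell-1}-k_\ell)=i-j$, the target is equivalent to $\sum_{\ell=1}^a P_\ell>0$, and as each $P_\ell\in\Z$ this is the same as $\sum_\ell P_\ell\geq 1$. Writing $n_\ell=5q_\ell+r_\ell$ with $q_\ell=\lfloor n_\ell/5\rfloor$ and inserting the explicit value of $g_{k_{\ell-1}k_\ell}=g_{k_{\ell-1}k_\ell}$, a short computation collapses to the clean formula
\[
P_\ell=6k_{\ell-1}-3q_\ell+3\,[\,r_\ell\in\{1,3\}\,]-5,
\]
where $[\,\cdot\,]$ is $1$ when true and $0$ otherwise. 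A direct inspection then shows $P_\ell\geq 1$ for every admissible step except $(k_{\ell-1},k_\ell)=(1,2)$, where $P_\ell=-2$. A single such bad step can occur, which is exactly why the hypothesis $a\geq 4$ (supplied here by $4\mid a$) is needed: the path $1,2,2,\ldots,2$ gives $\sum_\ell P_\ell=a-3$, so the statement is false for $a\leq 3$.

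The main step is a telescoping lower bound via a potential function. I would take $\psi(s):=3$ for $s$ even and $\psi(s):=0$ for $s$ odd, and prove the per-step estimate $P_\ell\geq 1+\psi(k_{\ell-1})-\psi(k_\ell)$ for every admissible step. Granting this, summing telescopes the potential:
\[
\sum_{\ell=1}^a P_\ell\ \geq\ a+\psi(k_0)-\psi(k_a)\ =\ a+\psi(i)-\psi(j)\ \geq\ a-3\ \geq\ 1
\]
whenever $4\mid a$. Unwinding, $\sum_\ell g_{k_{\ell-1}k_\ell}\geq \frac{5a+i-j+1}{12}>\frac{5a+i-j}{12}$ for every path, and the infimum over paths inherits the strict inequality, giving the claim.

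The hard part will be verifying the per-step inequality $P_\ell\geq 1+\psi(k_{\ell-1})-\psi(k_\ell)$. Three of the four parity combinations reduce immediately to the facts already noted ($P_\ell\geq 1$ in general, $P_\ell=-2$ only for the step $(1,2)$, which is the odd-to-even case where the right side is $-2$). The delicate case is $k_{\ell-1}$ even and $k_\ell$ odd, where the right side equals $4$ and one must show $6k_{\ell-1}-3q_\ell+3\,[\,r_\ell\in\{1,3\}\,]\geq 9$. Here $n_\ell$ is forced to be odd, so the parity of $q_\ell$ is coupled to $r_\ell$ and hence to the bonus term, and the bound becomes tight precisely because a large $q_\ell$ (smallest $k_\ell$, i.e.\ $n_\ell=7k_{\ell-1}-1$) switches the bonus on. I expect this to come down to a finite, periodic case analysis in the parities of $k_{\ell-1},k_\ell$ together with $n_\ell\bmod 5$ and the size of $q_\ell=\lfloor(7k_{\ell-1}-k_\ell)/5\rfloor$. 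It is perhaps not accidental that the multiplicative order of $7$ modulo $5$ is $4$, which hints at an alternative argument tracking $n_\ell\bmod 5$ around a period-$4$ cycle; but the parity potential above appears to be the most economical route.
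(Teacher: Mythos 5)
Your proof is correct, and it takes a genuinely different route from the paper's. The paper reduces to $a=4$, forms the defect matrix $D_{ij}=g_{ij}-(\frac{5}{12}+\frac{i-j}{12})$, iterates the tropical product $D*D$ twice, disposes of indices with $\max(i,j)>12$ via the crude bound $g_{ij}\geq\frac{7i-j}{30}$, and then verifies positivity of the leading $12\times 12$ block of $(D*D)*(D*D)$ by computer. You instead expand the product over paths $i=k_0,k_1,\ldots,k_a=j$, attach to each step the integer profit $P_\ell=12g_{k_{\ell-1}k_\ell}-5-(k_{\ell-1}-k_\ell)$ (which is exactly $12D_{k_{\ell-1}k_\ell}$), observe that $P_\ell\geq 1$ except for the single step $(1,2)$ where $P_\ell=-2$, and amortize that loss with the parity potential $\psi$. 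This replaces the machine verification by a short hand computation, and in fact proves the conclusion for every $a\geq 4$, not only for multiples of $4$; the same strategy should also tame the $27\times 27$ computation in Lemma \ref{L:5}. The one piece you left unverified --- the even-to-odd case $P_\ell\geq 4$ --- does check out: writing $k=k_{\ell-1}$, for even $k\geq 4$ one has $q_\ell\leq\lfloor(7k-1)/5\rfloor\leq 2k-3$, whence $6k-3q_\ell-5\geq 4$ without the bonus term, while for $k=2$ the seven admissible odd values $k_\ell\in\{1,3,\ldots,13\}$ give $6k-3q_\ell+3\,[\,r_\ell\in\{1,3\}\,]$ equal to $9$ or $15$ in every case, the bonus rescuing precisely $k_\ell\in\{1,3\}$. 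Two minor caveats: your parenthetical that the statement ``is false for $a\leq 3$'' only shows that this particular lower bound degenerates, not that some admissible $M_\ell$ actually violate the conclusion; and the path expansion is legitimate because $g_{ij}=+\infty$ forces $(M_\ell)_{ij}=0$ whenever $j>7i$, so each row has only finitely many nonzero entries and all sums are finite.
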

\begin{proof}
It is easy to see that it suffices to show it for $a=4$.
Let $D_{ij}:= g_{ij}-(\frac{5}{12}+\frac{i-j}{12})$.
Let $D:=\{D_{ij}\}_{i,j\geq 1}$.
Using the hypothesis,  one gets
$g_{ij}\geq \frac{7i-j}{30}$, hence
$D_{ij}>\frac{3}{6}$ whenever $\max(i,j)>12$.
Let $\ell:=12$. Then
$$
\left\{
\begin{array}{ll}
\ord_p D_{ij}\geq -\frac{1}{6} &\mbox{if $1\leq i,j\leq \ell$};\\
\ord_p D_{ij}>\frac{3}{6}     &\mbox{if $i>\ell$ or $j>\ell$}.
\end{array}
\right.
$$
(In fact, $D_{ij}\geq \frac{1}{12}$ except for
$D_{1,2}=-\frac{1}{6}$.)
Let $D*D$ be the infinite matrix defined by
$$(D*D)_{ij}:= \min_{k\geq 1}(D_{ik}+D_{kj}).$$
One has
$$
\left\{
\begin{array}{ll}
\ord_p (D*D)_{ij}\geq -\frac{2}{6}&\mbox{if $1\leq i,j\leq \ell$};\\
\ord_p (D*D)_{ij}> \frac{2}{6}    &\mbox{if $i>\ell$ or $j>\ell$};\\
\ord_p (D*D)_{ij}> \frac{6}{6}    &\mbox{if $i,j>\ell$}.
\end{array}
\right.
$$
Define $(D*D)*(D*D)$ analogously and apply the above method again,
then
\begin{eqnarray}\label{E:5}
\left\{
\begin{array}{ll}
\ord_p ((D*D)*(D*D))_{ij}\geq -\frac{4}{6}&\mbox{if $1\leq i,j\leq \ell$};\\
\ord_p ((D*D)*(D*D))_{ij}> 0              &\mbox{if $i>\ell$ or $j>\ell$};\\
\ord_p ((D*D)*(D*D))_{ij}>\frac{4}{6}     &\mbox{if $i,j>\ell$}.
\end{array}
\right.
\end{eqnarray}
On the other hand, it is not hard to show that for every $i,j\geq 1$
\begin{eqnarray}
\ord_p(M_1M_2)_{ij}-(\frac{5\cdot 2}{12}+\frac{i-j}{12})&\geq&(D*D)_{ij}.\\
\ord_p((M_1M_2)(M_3M_4))_{ij}-(\frac{5\cdot 4}{12}
+\frac{i-j}{12})&\geq&((D*D)*(D*D))_{ij}.
\label{E:2}
\end{eqnarray}
Let $\bar{D}:=\{D_{ij}\}_{1\leq  i,j\leq \ell}$.
We use a computer to verify that
$((\bar{D}*\bar{D})*(\bar{D}*\bar{D}))_{ij}>0$ for all
$1\leq i,j\leq \ell$. By (\ref{E:5}) and (\ref{E:2}) we see that
$$
\ord_p(M_1M_2M_3M_4)_{ij} > \frac{5\cdot 4}{12}+\frac{i-j}{12}.
$$
This finishes the proof.
\end{proof}

\begin{remark}
In the proof of the lemma above, the factor $1/12$ in $(i-j)/12$
will not affect the existence of $a$, though it will
probably affect the lower bound of $a$.
Namely if one chooses other factors, say $1/8$, then one may end up with
$>4$ many operations with $D$ and consequently will need to
modify the lower bound of $a$ in the statement.

In the above proof, one may intend to define
$(\bar{D}*\bar{D})_{ij}:=\min(D_{ik}+D_{kj},
\frac{3}{6}+\frac{3}{6})$.
But further analysis shows that our definition in the proof
suffices for the purpose of proving the first $\ell$ by $\ell$
submatrix of $(D*D)*(D*D)$ is positive.
\end{remark}

\begin{lemma}\label{L:2}
Let $a$ be a positive multiple of $4$ and notations be as above. Then
$$\ord_qC_1>\frac{5}{12};\quad \ord_qC_2>\frac{5}{6}.$$
\end{lemma}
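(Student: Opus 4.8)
The plan is to read $C_1$ and $C_2$ straight off definition (\ref{E:1}) in terms of the entries of $F_a$ and then to feed $F_a$ into Lemma \ref{L:1}. For $n=1$ the symmetric group is trivial, so $C_1=\sum_{u\geq 1}(F_a)_{u,u}=\Tr(F_a)$, while for $n=2$ one gets
$$
C_2=\sum_{1\leq u_1<u_2}\bigl((F_a)_{u_1,u_1}(F_a)_{u_2,u_2}-(F_a)_{u_1,u_2}(F_a)_{u_2,u_1}\bigr).
$$
The crucial observation is that $F_a=FF^{\tau^{-1}}\cdots F^{\tau^{-(a-1)}}$ is a product of exactly $a$ matrices, and since $F=\{G_{7i-j}^{\tau^{-1}}\}$ and $\tau$ preserves the $7$-adic valuation, each factor $F^{\tau^{-k}}$ has the $\ord_7$ of its $(i,j)$-entry bounded below by $g_{ij}$. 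Thus $F_a$ is precisely of the shape $M_1\cdots M_a$ to which Lemma \ref{L:1} applies (using that $a$ is a multiple of $4$), giving $\ord_7 (F_a)_{ij}>\frac{5a}{12}+\frac{i-j}{12}$ for all $i,j\geq 1$.

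Next I would convert these $\ord_7$ bounds to $\ord_q$ bounds via $\ord_q=\frac1a\ord_7$ and estimate each summand. For $C_1$ every diagonal entry satisfies $\ord_7 (F_a)_{u,u}>\frac{5a}{12}$, hence $\ord_q (F_a)_{u,u}>\frac{5}{12}$. For $C_2$ the diagonal products give $\ord_7\bigl((F_a)_{u_1,u_1}(F_a)_{u_2,u_2}\bigr)>\frac{5a}{6}$; the neat point is that in the cross term the two shifts $\frac{u_1-u_2}{12}$ and $\frac{u_2-u_1}{12}$ cancel, so $\ord_7\bigl((F_a)_{u_1,u_2}(F_a)_{u_2,u_1}\bigr)>\frac{5a}{6}$ as well. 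Both kinds of summand therefore satisfy $\ord_q>\frac{5}{6}$.

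Finally I would pass from the bound on each summand to the bound on the infinite sums $C_1$ and $C_2$. By the ultrametric inequality $\ord_q C_n$ is at least the infimum of the valuations of its summands, which is $\geq\frac{5}{12}$ (resp.\ $\geq\frac{5}{6}$); to upgrade this to a \emph{strict} inequality I must check that this infimum is attained, i.e.\ that only finitely many summands can lie close to the bound. This is where I expect the main (though routine) work to sit: using $g_{ij}\geq\frac{7i-j}{30}$ one shows that any closed path contributing to $(F_a)_{u,u}$ has valuation $\geq u/5$, so the diagonal entries—and likewise the relevant products governing $C_2$—have $\ord_q\to\infty$ as the indices grow. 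Hence the series converge and all but finitely many terms already exceed the target, while each of the finitely many remaining terms is strictly above it; taking the minimum preserves strictness and yields $\ord_q C_1>\frac{5}{12}$ and $\ord_q C_2>\frac{5}{6}$.
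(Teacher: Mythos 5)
Your proposal is correct and follows essentially the same route as the paper: apply Lemma \ref{L:1} to $F_a=FF^{\tau^{-1}}\cdots F^{\tau^{-(a-1)}}$ to get $\ord_7(F_a)_{ij}>\frac{5a}{12}+\frac{i-j}{12}$, then bound $C_1$ via the diagonal entries and $C_2$ via the cancellation of the shifts $\frac{u_1-u_2}{12}$ and $\frac{u_2-u_1}{12}$ in the $2\times 2$ minors. Your extra step verifying that the valuations of the summands tend to infinity (so that the strict inequality survives the infimum over infinitely many terms) is a point the paper passes over with ``one observes easily,'' and it is a legitimate refinement rather than a deviation.
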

\begin{proof}
Since $F=\{G_{7i-j}^{\tau^{-1}}\}_{i,j\geq 1},$
and $\ord_7 G_{7i-j} \geq g_{ij}$ as in Lemma \ref{L:1},
The hypothesis of Lemma \ref{L:1} is satisfied for
$F, F^{\tau^{-1}}, \ldots, F^{\tau^{-(a-1)}}$.
Apply Lemma \ref{L:1} to $F_a=FF^{\tau^{-1}}\cdots F^{\tau^{-(a-1)}}$,
one gets
\begin{eqnarray}\label{E:7}
ord_7(F_a)_{ij} &>& \frac{5a}{12} + \frac{i-j}{12}.
\end{eqnarray}
By (\ref{E:1}), one observes easily that $\ord_7 C_1>\frac{5a}{12}$.

On the other hand,
\begin{eqnarray*}
\ord_7C_2
&\geq &
\min_{i,j\geq 1}(\ord_p((F_a)_{ii}(F_a)_{jj}),\ord_p((F_a)_{ij}(F_a)_{ji}))
> \frac{2\cdot 5a}{12}= \frac{5a}{6},
\end{eqnarray*}
where the last inequality follows from (\ref{E:7}).
The lemma follows immediately.
\end{proof}

\begin{proposition}\label{P:1}
The curve $X_1: y^7-y=x^5+cx^2$ over $\bar\F_7$ is
supersingular.
\end{proposition}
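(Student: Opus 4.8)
The plan is to deduce supersingularity from the numerical input of Lemma~\ref{L:2} together with three structural facts about $\NP(f/\F_q)$ supplied by Proposition~\ref{P:Dwork}. Here $d=5$, so $L(f/\F_q;T)$ (with $f=x^5+cx^2$) has degree $d-1=4$, and the curve $X_1$ has genus $g=(7-1)(5-1)/2=12$. The six exponential-sum $L$-functions attached to the nontrivial additive characters are Galois conjugate over $\Q$ (via $\zeta_7\mapsto\zeta_7^k$), hence share a single Newton polygon, and their product is the numerator of the zeta function of $X_1$. Consequently $X_1$ is supersingular if and only if $\NP(f/\F_q)$ is the straight segment of slope $\tfrac12$ from $(0,0)$ to $(4,2)$, and I would prove exactly this.

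First I would record the three facts. By Proposition~\ref{P:Dwork}, $\NP(f/\F_q)$ is the image of the Newton polygon of the zeta function of $X_1$ under division of both coordinates by $p-1=6$. That zeta polygon runs from $(0,0)$ to $(2g,g)=(24,12)$, so $\NP(f/\F_q)$ runs from $(0,0)$ to $(4,2)$; in particular $\ord_q C_4=2$. The zeta polygon is symmetric about slope $\tfrac12$ by the functional equation, and dividing both coordinates by $6$ leaves all slopes unchanged, so the slopes $s_1\le s_2\le s_3\le s_4$ of $\NP(f/\F_q)$ satisfy $s_1+s_4=s_2+s_3=1$. Finally, the zeta polygon has integral break points (Dieudonn\'e--Manin), so after division by $6$ every vertex of $\NP(f/\F_q)$ has ordinate in $\tfrac16\Z$.

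With these in hand the conclusion is a short convexity argument. Being a lower convex hull with endpoints $(0,0)$ and $(4,2)$, the polygon $\NP(f/\F_q)$ lies on or below the chord $y=x/2$, and it equals that chord precisely when $s_1=\tfrac12$. Suppose instead $s_1<\tfrac12$. If also $s_2<\tfrac12$, then $s_2<s_3$ forces a vertex at abscissa $2$, whose ordinate is $\ord_q C_2=s_1+s_2<1$; integrality then gives $\ord_q C_2\le\tfrac56$, contradicting $\ord_q C_2>\tfrac56$ from Lemma~\ref{L:2}. Otherwise $s_2=\tfrac12$ (whence $s_3=\tfrac12$), so $s_1<s_2$ forces a vertex at abscissa $1$, whose ordinate is $\ord_q C_1=s_1<\tfrac12$; integrality gives $\ord_q C_1\le\tfrac13$, contradicting $\ord_q C_1>\tfrac{5}{12}$. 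Hence $s_1=\tfrac12$, all four slopes equal $\tfrac12$, and $X_1$ is supersingular.

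The arithmetic heart of the argument, namely the valuation estimates $\ord_q C_1>\tfrac{5}{12}$ and $\ord_q C_2>\tfrac56$, is already contained in Lemmas~\ref{L:1} and~\ref{L:2}, so no further computation with the $G_n$ is needed. The part that requires care, and which I expect to be the main point to get right, is the clean invocation of the three structural facts from Proposition~\ref{P:Dwork}: that the right endpoint is $(4,2)$, that the slopes are symmetric about $\tfrac12$, and above all that every vertex ordinate lies in $\tfrac16\Z$. It is precisely this last integrality that closes the gaps $\tfrac12-\tfrac{5}{12}=\tfrac1{12}$ and $1-\tfrac56=\tfrac16$ left open by Lemma~\ref{L:2}, which explains why those estimates, though they fall short of $\tfrac12$ and $1$, are nonetheless sharp enough to force supersingularity.
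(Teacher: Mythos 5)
Your proof is correct and takes essentially the same route as the paper: it feeds the bounds $\ord_qC_1>\tfrac{5}{12}$ and $\ord_qC_2>\tfrac56$ from Lemma~\ref{L:2} into the symmetry and $\tfrac16\Z$-integrality of the vertices of $\NP(f/\F_q)$ supplied by Proposition~\ref{P:Dwork}. The only difference is that you spell out the convexity case analysis that the paper compresses into ``it is easy to derive that the first slope has to be $1/2$.''
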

\begin{proof}
Let $c$ be an arbitrary element in $\F_{7^a}$ for some
$a$ which is a positive multiple of $4$ (one can always do so since our
Newton polygon does not depend on $a$).
Because we know that the (normalized)
Newton polygon of any abelian variety over finite fields
is symmetric whose vertices all have integral coordinates,
the same holds for curves over finite fields.
Since $\NP(f/\F_{7^a})$ is of the same shape as
the Newton polygon of the zeta function of $X_1$ shrunk by a factor
of $1/6$, we know that $\NP(f/\F_{7^a})$ is symmetric and
every vertex has its $y$-coordinate equal to a multiple of $1/6$.
By Lemma \ref{L:2} we know that $\ord_qC_1>5/12$ and $\ord_qC_2>5/6$.
Then it is easy to derive that the
the first slope of $\NP(f/\F_{7^a})$ has to be
$1/2$ and so $X_1$ is supersingular.
\end{proof}

\section{$X_2: y^5-y = x^7+cx$ is supersingular}

In this section let $\Supp(f)=\Supp(x^7+cx)=\{1,7\}$.
For any $c\in\F_q$ with $q=5^a$, by (\ref{E:Bound}), one has
\begin{eqnarray*}
\ord_5G_n
&\geq&
\left\{
\begin{array}{ll}
+\infty                &\mbox{if $n<0$},\\
\min_{m_1,m_7\geq 0; m_1+7m_7=n}(m_1+m_7)/4 &\mbox{if $n\geq 0$}
\end{array}
\right.\\
&=&
\left\{
\begin{array}{ll}
+\infty & \mbox{if $n<0$} \\
\frac{\lfloor\frac{n}{7}\rfloor+\bar{n}}{4}&\mbox{if $n\geq 0$}
\end{array}\right.,
\end{eqnarray*}
where $\bar{n}$ is the least nonnegative residue of $n\bmod 7$.
For all $i,j\geq 1$ let $g_{ij}$ be the lower bound of
$\ord_5G_{5i-j}$ given above.

\begin{lemma}\label{L:5}
Let $a$ be any positive integer divisible by $8$. Let
$M_1, \ldots, M_a$ be infinite matrices over a $p$-adic ring
such that every matrix $M_\ell$ (where $\ell = 1,\ldots,a$)
has  $\ord_p (M_\ell)_{ij}\geq g_{ij}$.
Then
$$\ord_p (M_1\cdots M_a)_{ij}> \frac{5a}{12}+\frac{i-j}{8}.$$
\end{lemma}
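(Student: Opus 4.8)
The plan is to imitate the structure of Lemma~\ref{L:1}, which handled the $\{2,5\}$ case, adapting the numerology to the new support $\Supp(f)=\{1,7\}$, the new prime $p=5$, and the new linear functional $\frac{5a}{12}+\frac{i-j}{8}$. First I would reduce to the case $a=8$: since the claimed bound is linear in $a$ with slope $5/12$ per factor, and since the product of eight blocks will be shown to gain strictly more than the required amount, an arbitrary multiple of $8$ follows by grouping the $M_\ell$ into consecutive batches of eight and multiplying the resulting strict inequalities. So it suffices to prove
$$
\ord_p (M_1\cdots M_8)_{ij} > \frac{5\cdot 8}{12}+\frac{i-j}{8}=\frac{10}{3}+\frac{i-j}{8}.
$$

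Next I would define the deficiency matrix $D_{ij}:= g_{ij}-\bigl(\frac{5}{12}+\frac{i-j}{8}\bigr)$ and study the tropical (min-plus) convolution $(D*D)_{ij}:=\min_{k\ge1}(D_{ik}+D_{kj})$, exactly as in the proof of Lemma~\ref{L:1}. The point of subtracting the linear functional $\frac{5}{12}+\frac{i-j}{8}$ is that it telescopes under convolution: a product of $2^r$ factors carries a deficiency bounded below by the $r$-fold self-convolution of $D$, minus the linear term evaluated at $(i,j)$. The key structural input is a two-regime estimate: from the explicit lower bound $g_{ij}\ge\frac{5i-j}{28}+\frac{\bar n}{4}$ (with $n=5i-j$) one should check that $D_{ij}$ is bounded below by a fixed negative constant on a finite corner $1\le i,j\le\ell$ for a suitable cutoff $\ell$, and is \emph{strictly positive} (bounded below by a positive constant) as soon as $\max(i,j)>\ell$. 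One then propagates this through three convolutions, tracking how the ``bad corner'' shrinks: each convolution either doubles the negative contribution on the surviving corner or pushes the off-corner entries up by a positive amount, so that after forming $((D*D)*(D*D))$ and one more doubling (since we need $2^3=8$ factors) the off-corner entries become positive while the corner retains a controlled negative lower bound.

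The main obstacle, as in Lemma~\ref{L:1}, will be the finite corner: after the symbolic min-plus estimates push all off-corner entries above zero, there remains an $\ell\times\ell$ block where the negative accumulation must be beaten by the genuine positivity hidden in the true entries of $D$ rather than the coarse constant $-1/6$-type bound. I would therefore finish, as the authors do, by a finite computer verification that the relevant self-convolution of the truncated matrix $\bar D=\{D_{ij}\}_{1\le i,j\le\ell}$ is strictly positive on the whole corner. The delicate point is choosing $\ell$ large enough that all off-corner contributions are provably positive yet small enough that the finite check is feasible; because the slope $1/8$ here differs from the $1/12$ used in Section~3, the cutoff $\ell$ and the number of convolution steps must be recomputed, and it is precisely this recalibration (verifying that eight factors, i.e.\ three doublings, suffice rather than four) that forces the hypothesis $8\mid a$ in place of $4\mid a$.
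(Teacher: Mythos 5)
Your proposal follows essentially the same route as the paper's proof: reduce to $a=8$, form the deficiency matrix $D_{ij}=g_{ij}-\bigl(\tfrac{5}{12}+\tfrac{i-j}{8}\bigr)$, perform three min-plus self-convolutions to cover eight factors, show the off-corner entries are positive by the linear growth of $g_{ij}$, and finish with a computer check that the convolved truncated block $\bar D$ (the paper takes $\ell=27$) is entrywise positive. The only quibble is your parenthetical formula for $g_{ij}$: with $n=5i-j$ one has $g_{ij}=\tfrac{\lfloor n/7\rfloor+\bar n}{4}=\tfrac{n-\bar n}{28}+\tfrac{\bar n}{4}$, not $\ge\tfrac{n}{28}+\tfrac{\bar n}{4}$, but the weaker correct bound $g_{ij}\ge\tfrac{5i-j}{28}$ still yields the off-corner positivity you need, so the argument is unaffected.
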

\begin{proof}
Of course the proof is analogous to that of Lemma \ref{L:1}.
We shall briefly describe our proof below.
It suffices to show it for $a=8$.
Let $D_{ij}:=\frac{5}{12}+\frac{i-j}{8}$ and let $D:=\{D_{ij}\}$.
Then $D_{ij}>\frac{7}{6}$ if $\max(i,j)>27$.
In fact one verifies that
$D_{3,1}=-\frac{1}{6}$ and
$D_{2,3}=-\frac{1}{24}$
are the only negative
entries in $D$.
Let $\ell:=27$.
Let $\bar{D}:=\{D_{ij}\}_{1\leq i,j\leq \ell}$. Then
by computing
$((\bar{D}*\bar{D})*(\bar{D}*\bar{D}))*
((\bar{D}*\bar{D})*(\bar{D}*\bar{D}))$,
one notes that each $(i,j)$-th entry is $>0$.
Then we conclude the lemma with similar argument as
that of Lemma \ref{L:1}.
\end{proof}

Analogous to Lemma \ref{L:2} one shows that
$$
\ord_q C_1>\frac{5}{12},\quad
\ord_q C_2>\frac{2\cdot 5}{12},\quad
\ord_q C_3>\frac{3\cdot 5}{12}.$$
Then $X_2$ is supersingular.


\begin{thebibliography}{99}

\bibitem{Bombieri}
{\sc Enrico Bombieri:}
On exponential sums in finite fields,
{\it American J. Math.} {\bf 88} (1966), 71--105.

\bibitem{Elkies}
{\sc Noam Elkies:}
Private communication.

\bibitem{Koblitz}
{\sc Neal Koblitz:}
$p$-adic numbers, p-adic analysis, and Zeta-functions,
(Second edition),
{\it Graduate Texts in Mathematics} {\bf 58}.
Springer-Verlag, 1984.

\bibitem{Li-Oort:98}
{\sc K-Z. Li; F. Oort:} Moduli of supersingular abelian varieties,
{\it Lecture Notes in Mathematics}, vol. {\bf 1680} (1998).

\bibitem{SZ:03}
{\sc Jasper Scholten; Hui June Zhu:}
Slope estimates on Artin-Schreier curves.
{\it Compositio Math.}
{\bf 137} (2003), 275--292.

\bibitem{SZ:01}
{\sc Jasper Scholten; Hui June Zhu:}
Hyperelliptic curves in characteristic $2$.
{\it Inter. Math. Research Notices},
{\bf 17} (2002), 905--917.

\bibitem{SZ:02}
{\sc Jasper Scholten; Hui June Zhu:}
Families of supersingular curves in characteristic $2$.
{\it Mathematical Research Letters.}
{\bf 9}(2002), no. 5-6, 639--650.

\bibitem{Zhu:1}
{\sc Hui June Zhu:}
$p$-adic variation of $L$ functions
of one variable exponential sums, I.
{\it Amer. J. Math.}
{\bf 125}(2003), 669--690.

\bibitem{Zhu:2}
{\sc Hui June Zhu:}
Asymptotic variation of $L$ functions of
one-variable exponential sums.
{\it J. Reine Angew. Math.},
{\bf 572} (2004), 219--233.

\end{thebibliography}
\end{document}